\documentclass[11pt]{amsart}

\usepackage{amsmath,amssymb,amsthm}
\usepackage{mathtools}
\usepackage[hidelinks]{hyperref}

\theoremstyle{plain}
\newtheorem{theorem}{Theorem}[section]
\newtheorem{proposition}[theorem]{Proposition}
\newtheorem{lemma}[theorem]{Lemma}
\newtheorem{corollary}[theorem]{Corollary}

\theoremstyle{definition}
\newtheorem{remark}[theorem]{Remark}

\DeclareMathOperator{\Nm}{N}
\newcommand{\Z}{\mathbb{Z}}
\newcommand{\C}{\mathbb{C}}

\title[The $5$-divisible determinants for $C_5^2$]
{The $5$-divisible integer group determinants for the elementary abelian group of order~25}

\author{Chatchawan Panraksa}

\address{Applied Mathematics,
Mahidol University International College,
Nakhon Pathom 73170, Thailand}

\email{chatchawan.pan@mahidol.ac.th}

\date{}

\begin{document}

\begin{abstract}
Let $G=C_5\times C_5$, and let $S(G)$ denote the set of integer values of
its group determinant. Previous work determines the values in $S(G)$
coprime to~$5$ and proves that every $5$-divisible value is divisible by
$5^8$. We prove the converse inclusion
\[
  5^8\Z\subseteq S(G).
\]
Consequently,
\[
  S(G)=\{m\in\Z:m\equiv\pm1\text{ or }\pm7\pmod{25}\}\cup5^8\Z.
\]
The proof uses a general shift criterion and three explicit polynomials
whose group determinants are $5^8$, $2\cdot5^8$, and $5^9$.
Together with the known classification for $C_{25}$, this completes the
Taussky--Todd integer group determinant problem for all groups of order~$25$.
\end{abstract}

\keywords{Group determinant, Taussky--Todd problem, cyclotomic norm,
elementary abelian group}

\subjclass[2020]{Primary 11C20; Secondary 11R18, 15B36, 20C15}

\maketitle

\section{Introduction}\label{sec:introduction}

For a finite group $G$, the group determinant introduced by
Frobenius~\cite{Frobenius1896} is
\[
  \Theta_G((x_g)_{g\in G})=\det(x_{gh^{-1}})_{g,h\in G}.
\]
When $G$ is abelian, Dedekind's factorization gives
\begin{equation}\label{eq:dedekind}
  \Theta_G((x_g))
  =\prod_{\chi\in\widehat G}\sum_{g\in G}x_g\chi(g),
  \qquad
  \widehat G:=\operatorname{Hom}(G,\C^\times);
\end{equation}
see also~\cite[pp.~420--421]{Dedekind1896}. We write
\[
  S(G):=\{\Theta_G((a_g)):a_g\in\Z\}\subseteq\Z.
\]
The problem of determining $S(G)$ is often called the
Taussky--Todd integer group determinant problem. This problem is now
solved for every group of order less than~20
\cite{PinnerSmyth2020,PaudelPinner2022Order15,YamaguchiYamaguchi2024Order16,PaudelPinner2025Order18}.

For an odd prime $p$, put $G_p=C_p\times C_p$ and
$\zeta_p=e^{2\pi i/p}$. If
\[
  F(x,y)=\sum_{r,s=0}^{p-1}a_{r,s}x^r y^s\in\Z[x,y],
\]
then the corresponding group determinant is
\begin{equation}\label{eq:measure}
  M_p(F):=\prod_{i,j=0}^{p-1}F(\zeta_p^i,\zeta_p^j).
\end{equation}
Thus $S(G_p)=\{M_p(F):F\in\Z[x,y]\}$, after reducing exponents modulo~$p$.

Two earlier results settle the necessary conditions for $G_5$.
First, DeSilva and Pinner~\cite[Lemmas~2.1 and~2.2]{DeSilvaPinner2014}
proved that the values for $C_p^n$ coprime to~$p$ are precisely
\[
  \{m\in\Z:m^{p-1}\equiv1\pmod{p^n}\}.
\]
For $p=5$ and $n=2$, these are the four residue classes
$m\equiv\pm1,\pm7\pmod{25}$. Second, Mossinghoff and
Pinner~\cite{MossinghoffPinner2024} proved that
\begin{equation}\label{eq:known-divisibility}
  p\mid M_p(F)\quad\Longrightarrow\quad p^{p+3}\mid M_p(F).
\end{equation}
They also showed that every possible $p$-adic valuation at least $p+3$
occurs. For $p=5$,~\eqref{eq:known-divisibility} gives the necessary
condition $5^8\mid M_5(F)$ whenever $5\mid M_5(F)$, but it does not
determine which cofactors of $5^8$ occur.

The new result of this paper is that every cofactor occurs.

\begin{theorem}\label{thm:divisible-image}
For $G_5=C_5\times C_5$,
\[
  S(G_5)\cap5\Z=5^8\Z.
\]
\end{theorem}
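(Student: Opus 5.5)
The inclusion $S(G_5)\cap5\Z\subseteq5^8\Z$ is exactly \eqref{eq:known-divisibility} with $p=5$. So the whole task is the reverse inclusion $5^8\Z\subseteq S(G_5)$, and I would get it from multiplicativity plus a shift trick.

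\emph{Multiplicativity.} Since $M_5(FG)=M_5(F)M_5(G)$, the set $S(G_5)$ is closed under multiplication. It also contains every $m$ with $m\equiv\pm1,\pm7\pmod{25}$, by the DeSilva--Pinner result.

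\emph{Shift criterion.} The key general tool is a lemma of the following form. If $M_5(F)=5^8k$ and $F$ has a suitable special shape, then for every integer $t$ some explicit modification $F_t$ of $F$ satisfies $M_5(F_t)=5^8(k+25t)$, or at least $5^8(k+\text{multiple of }25)$. I would try $F_t=F+t\,\Phi$, where $\Phi$ vanishes at all characters except those on one line through the trivial character. A natural candidate is
\[
\Phi=\frac{(x^5-1)}{(x-1)}\cdot h(y),
\]
which is supported on the characters with $x=1$. Then only the five factors $F(1,\zeta^j)$ change. If $F(1,y)$ is arranged so that the remaining product over those factors is controlled, the determinant becomes $5^8$ times a quantity that is affine in $t$ with step divisible by $25$. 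More concretely, I would choose $F$ so that $F(1,\zeta^j)$ is the same for all $j\ne0$, up to units, and perturb only the trivial-character value $F(1,1)$. That value changes by $25t$ times a factor, since $\Phi(1,1)=5h(1)$, and one more factor of $5$ comes from another choice. Then
\[
M_5(F_t)=\frac{M_5(F)}{F(1,1)}\,\bigl(F(1,1)+25t'\bigr).
\]

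\emph{Residues.} Given the shift criterion, every cofactor class $k$ modulo $25$ needs one realizing seed. The classes $k\equiv\pm1,\pm7$ come from $5^8$ times units. The classes $k\equiv\pm2,\pm14\equiv\pm11$ come from a seed $2\cdot5^8$ times units. Multiplying the generators $\pm1,\pm7$ by $2$ gives $\pm2,\pm14$, and multiplying by $4$ gives $\pm4,\pm28\equiv\pm3$. Multiplying by $8$ gives $\pm8,\pm56\equiv\pm6$, and by $16$ gives $\pm16,\pm112\equiv\pm12$. This covers all classes coprime to $5$, since $2$ generates $(\Z/25)^\times$ modulo the subgroup $\{\pm1,\pm7\}$ of order $4$ and index $5$. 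The classes with $5\mid k$ come from a seed $5^9$ and its products with the previous seeds. Every $k\equiv0\pmod{25}$ is then reached by shifting, and $k=0$ is reached by $F=0$.

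So the plan is: exhibit explicit $F_1,F_2,F_3$ with $M_5$ equal to $5^8$, $2\cdot5^8$, $5^9$; verify they admit the shift; and combine them by closure under products, noting that products of shiftable polynomials should remain shiftable. The main obstacle is the shift criterion itself. The step must be exactly $25$ times $5^8$ and must hold uniformly in $t$, which forces a careful analysis of the $5$-adic valuation of the unchanged factors. I would first try $F$ whose $x=1$ specialization is simple, e.g.\ $F(1,y)=c$ constant, so that the nontrivial factors on that line are known exactly. After that, I would find the three seeds by computer search over small-coefficient polynomials.
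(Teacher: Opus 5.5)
Your shift formula $M_5(F_t)=\frac{M_5(F)}{F(1,1)}\bigl(F(1,1)+25t\bigr)$ is correct. Changing only the trivial factor forces $h\equiv c\,\Phi_5(y)\pmod{y^5-1}$, so your $\Phi$ is just $cH_5$, as in the paper. However, you misread the step of this formula, and the rest of the plan rests on that misreading.

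The increment in $M_5$ is $25P$, where $P=M_5(F)/F(1,1)$ is the product of the $24$ nontrivial factors. It is not $25\cdot5^8$. Since $F(\zeta^i,\zeta^j)\equiv F(1,1)\pmod{\pi}$, the condition $5\mid M_5(F)$ forces $5\mid F(1,1)$. So if $M_5(F)=5^8k$ with $5\nmid k$, then $v_5(P)\le 7$ (where $v_5$ is the $5$-adic valuation). The step in the cofactor is $P/5^6$, which is therefore divisible by $5$ at most once. A cofactor step of $25$ is impossible. A step $5u$ with $|u|>1$ would not even cover a full class mod $5$.

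The missing idea is the normalization $F_a(1,1)=5a$ and $M_5(F_a)=5^8a$, i.e.\ $P=5^7$ exactly. This gives $M_5(F_a+cH_5)=5^8(a+5c)$. Then only residues mod $5$ matter, and the seeds $a=1,2,5$ together with $M_5(-F)=-M_5(F)$ finish the proof. No multiplicativity is needed at all.

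Your mod-$25$ bookkeeping also fails on its own terms.
\begin{itemize}
\item A product of two $5$-divisible determinants is divisible by $5^{16}$. So the only multipliers that keep the power $5^8$ are values coprime to $5$, namely $m\equiv\pm1,\pm7\pmod{25}$.
\item From $5^8$ and $2\cdot5^8$ you therefore reach only the classes $\pm1,\pm7,\pm2,\pm11$.
\item Your ``multiplying by $4$, $8$, $16$'' would require $4,8,16\in S(G_5)$. This is false, since their fourth powers are $6,21,11\pmod{25}$. So the classes $\pm3,\pm4,\pm6,\pm8,\pm9,\pm12$ are never reached.
\item The step for a product $FG$ is $25P_FP_G$, so ``products of shiftable polynomials remain shiftable'' does not preserve the step.
\item Under your own step-$25$ model, the seed with $k=5$ never reaches the nonzero $k\equiv0\pmod{25}$.
\end{itemize}

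Your first try $F(1,y)\equiv c$ also cannot produce the $5^8$ seed. The factor $F(1,1)$ and the orbit of $(0,1)$ contribute $c^5$, and the other five orbits contribute at least $5^5$, forcing $5^{10}\mid M_5(F)$. The explicit seeds with $F_a(1,1)=5a$ are the substantive content of the proof, and your proposal leaves them to a search.
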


Combining Theorem~\ref{thm:divisible-image} with the result of
DeSilva and Pinner gives the complete integer image.

\begin{corollary}\label{cor:full-image}
For $G_5=C_5\times C_5$,
\[
  S(G_5)
  =\{m\in\Z:m\equiv\pm1\text{ or }\pm7\pmod{25}\}\cup5^8\Z.
\]
\end{corollary}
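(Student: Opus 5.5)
The statement to prove is Corollary~\ref{cor:full-image}. It follows by splitting $S(G_5)$ according to divisibility by~$5$.

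\emph{Values coprime to~$5$.} The result of DeSilva and Pinner~\cite[Lemmas~2.1 and~2.2]{DeSilvaPinner2014}, with $p=5$ and $n=2$, gives
\[
  S(G_5)\setminus5\Z=\{m\in\Z:5\nmid m,\ m^{4}\equiv1\pmod{25}\}.
\]
So the first step is a short computation identifying this set with the four residue classes $\pm1,\pm7\pmod{25}$.

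The group $(\Z/25\Z)^\times$ is cyclic of order~$20$. Hence the solutions of $m^4\equiv1$ form its unique subgroup of order~$4$. Since $7^2=49\equiv-1\pmod{25}$, the element $7$ has order~$4$. Therefore this subgroup is $\{1,7,-1,-7\}$.

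Every integer $m\equiv\pm1,\pm7\pmod{25}$ is automatically coprime to~$5$. The condition $5\nmid m$ is therefore absorbed, and we obtain
\[
  S(G_5)\setminus5\Z=\{m\in\Z:m\equiv\pm1\text{ or }\pm7\pmod{25}\}.
\]

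\emph{Values divisible by~$5$.} Theorem~\ref{thm:divisible-image} gives $S(G_5)\cap5\Z=5^8\Z$ directly.

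\emph{Conclusion.} Writing $S(G_5)=(S(G_5)\setminus5\Z)\cup(S(G_5)\cap5\Z)$ and substituting the two descriptions gives the claimed formula. The two pieces are disjoint, because no integer $m\equiv\pm1,\pm7\pmod{25}$ is divisible by~$5$.

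No step here is a real obstacle; all the substance lies in Theorem~\ref{thm:divisible-image}.

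For that theorem, the intended route is as follows. Necessity is~\eqref{eq:known-divisibility}. For sufficiency, I would first establish a multiplicativity/shift criterion. If $5^8u$ is attained and $m\equiv\pm1,\pm7\pmod{25}$ is attained, then $M_5(FG)=M_5(F)M_5(G)$ is attained, so $5^8um$ is attained. This reduces the problem to cofactors modulo the subgroup of residues $\pm1,\pm7$, together with the powers of~$5$.

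The remaining cofactor classes are handled by the explicit values $5^8$, $2\cdot5^8$ and $5^9$, combined with the factor $2^4$-type values available from the coprime part. The hardest step would be finding these explicit polynomials and verifying their norms, which I would do by computer search.
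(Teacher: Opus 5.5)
Your proof of the corollary is correct and is the paper's argument. You use DeSilva--Pinner for the values coprime to $5$ and Theorem~\ref{thm:divisible-image} for the $5$-divisible values. Your cyclic-group computation ($7^2\equiv-1\pmod{25}$, so $\{\pm1,\pm7\}$ is the unique subgroup of order $4$ in $(\Z/25\Z)^\times$) spells out a step the paper states without comment.

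One caution about your closing sketch of Theorem~\ref{thm:divisible-image}. It is not needed for the corollary, but it would not work as written. Multiplicativity $M_5(FG)=M_5(F)M_5(G)$ only produces products, so it does not reduce the problem to cofactor classes modulo $\{\pm1,\pm7\}$. Every $5$-divisible value has $5$-adic valuation at least $8$, and every coprime value is $\equiv\pm1,\pm7\pmod{25}$. So $5^8q$ with $q$ a prime, $q\not\equiv\pm1,\pm7\pmod{25}$ (e.g.\ $q=3$), can arise as a product only if $\pm5^8q$ is already attained. Hence no finite list of seeds suffices. Separately, $16=2^4$ is not in $S(G_5)$, since $16\not\equiv\pm1,\pm7\pmod{25}$.

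The missing idea is the additive shift of Lemma~\ref{lem:shift}, $M_5(F+cH_5)=\frac{F(1,1)+25c}{F(1,1)}M_5(F)$. It turns a seed with $F(1,1)=5a$ and $M_5(F)=5^8a$ into $5^8(a+5c)$. Seeds for $a=\pm1,\pm2,5$ therefore give every cofactor.
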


Since every group of order~$25$ is isomorphic to either $C_{25}$ or
$C_5\times C_5$, and Mossinghoff and Pinner determined $S(C_{25})$
in~\cite{MossinghoffPinner2023PrimePower},
Corollary~\ref{cor:full-image} completes the Taussky--Todd integer group
determinant problem for all groups of order~$25$.

The proof is organized around a shift identity valid for every odd
prime. It reduces the inclusion $5^8\Z\subseteq S(G_5)$ to three
explicit ``seed'' determinants. Section~\ref{sec:shift} develops this
criterion, Section~\ref{sec:seeds} verifies the three seeds, and
Section~\ref{sec:proof} completes the proof and records the limitation
of the method for larger primes.

\section{A shift identity and a seed criterion}\label{sec:shift}

Let
\[
  \Phi_p(t)=1+t+\cdots+t^{p-1},
  \qquad H_p(x,y)=\Phi_p(x)\Phi_p(y).
\]
For $0\le i,j<p$,
\begin{equation}\label{eq:H-values}
  H_p(\zeta_p^i,\zeta_p^j)
  =\begin{cases}
      p^2,&(i,j)=(0,0),\\
      0,&(i,j)\ne(0,0).
    \end{cases}
\end{equation}

\begin{lemma}[Shift identity]\label{lem:shift}
Let $F\in\Z[x,y]$ and $c\in\Z$. Then
\begin{equation}\label{eq:shift}
  M_p(F+cH_p)
  =\bigl(F(1,1)+p^2c\bigr)
    \prod_{(i,j)\ne(0,0)}F(\zeta_p^i,\zeta_p^j).
\end{equation}
If $F(1,1)\ne0$, this becomes
\begin{equation}\label{eq:shift-ratio}
  M_p(F+cH_p)
  =\frac{F(1,1)+p^2c}{F(1,1)}M_p(F).
\end{equation}
Moreover, when $p$ is odd,
\begin{equation}\label{eq:negation}
  M_p(-F)=-M_p(F).
\end{equation}
\end{lemma}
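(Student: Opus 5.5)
The plan is to evaluate the product in \eqref{eq:measure} directly, splitting off the trivial character and using the values of $H_p$ recorded in \eqref{eq:H-values}.

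First, since $M_p$ is multiplicative over the $p^2$ evaluation points and evaluation is additive, we have $(F+cH_p)(\zeta_p^i,\zeta_p^j)=F(\zeta_p^i,\zeta_p^j)+cH_p(\zeta_p^i,\zeta_p^j)$ for each pair $(i,j)$. By \eqref{eq:H-values}, the perturbation vanishes at every $(i,j)\ne(0,0)$, so those factors are exactly $F(\zeta_p^i,\zeta_p^j)$. At $(0,0)$ the factor is $F(1,1)+p^2c$. Taking the product over all $(i,j)$ gives \eqref{eq:shift}. The identity \eqref{eq:H-values} itself is immediate: $\Phi_p(1)=p$, and $\Phi_p(\zeta_p^k)=0$ for $k\not\equiv0\pmod p$. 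Thus $H_p(\zeta_p^i,\zeta_p^j)$ vanishes as soon as $i$ or $j$ is nonzero, and equals $p^2$ at $(0,0)$.

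Second, for \eqref{eq:shift-ratio}, note that taking $c=0$ in \eqref{eq:shift} shows that $M_p(F)=F(1,1)\prod_{(i,j)\ne(0,0)}F(\zeta_p^i,\zeta_p^j)$. When $F(1,1)\ne0$ we may divide by it, which expresses the product over nontrivial pairs as $M_p(F)/F(1,1)$. Substituting this back into \eqref{eq:shift} gives the ratio form. One should note that the ratio is an identity of complex (indeed rational) numbers; nothing about integrality of the quotient is claimed or needed.

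Third, for \eqref{eq:negation}, each of the $p^2$ factors changes sign under $F\mapsto -F$. Hence $M_p(-F)=(-1)^{p^2}M_p(F)$, and $p^2$ is odd when $p$ is odd. There is no genuine obstacle anywhere in this argument. The only point needing care is keeping track of the trivial character separately and making sure the division in \eqref{eq:shift-ratio} is justified by the hypothesis $F(1,1)\ne0$.
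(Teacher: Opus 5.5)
Your proposal is correct and follows the paper's own argument. You split off the $(0,0)$ factor using \eqref{eq:H-values}, divide by $F(1,1)$ for the ratio form, and use the sign count $(-1)^{p^2}=-1$ for odd $p$; your extra verification of \eqref{eq:H-values} via $\Phi_p(1)=p$ and $\Phi_p(\zeta_p^k)=0$ is a harmless addition.
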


\begin{proof}
Equation~\eqref{eq:H-values} shows that adding $cH_p$ changes only the
factor indexed by $(i,j)=(0,0)$ in~\eqref{eq:measure}, proving
\eqref{eq:shift} and~\eqref{eq:shift-ratio}. Since $M_p$ is homogeneous
of degree $p^2$, and $p^2$ is odd, $M_p(-F)=(-1)^{p^2}M_p(F)=-M_p(F)$.
\end{proof}

The following criterion isolates the constructive part of the problem.

\begin{proposition}[Seed criterion]\label{prop:seed-criterion}
Let $p$ be odd. Suppose that $A\subset\Z\setminus\{0\}$ maps
surjectively onto $\Z/p\Z$ and that, for each $a\in A$, there is a
polynomial $F_a\in\Z[x,y]$ satisfying
\begin{equation}\label{eq:seed-condition}
  F_a(1,1)=pa,
  \qquad
  M_p(F_a)=a\,p^{p+3}.
\end{equation}
Then
\[
  p^{p+3}\Z\subseteq S(G_p).
\]
\end{proposition}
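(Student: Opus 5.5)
The plan is to apply the shift identity (Lemma~\ref{lem:shift}) to each seed $F_a$. Fix a target $m=p^{p+3}k$ with $k\in\Z$. By the surjectivity hypothesis, choose $a\in A$ with $a\equiv k\pmod p$, and write $k=a+pc$ with $c\in\Z$. Since $F_a(1,1)=pa\neq0$ (because $a\neq0$), equation~\eqref{eq:shift-ratio} applies to $F_a$ and gives
\[
  M_p(F_a+cH_p)=\frac{pa+p^2c}{pa}\,M_p(F_a)
  =\frac{a+pc}{a}\,a\,p^{p+3}=(a+pc)\,p^{p+3}=k\,p^{p+3}.
\]
Reducing exponents of $F_a+cH_p$ modulo $p$ does not change its values at $(\zeta_p^i,\zeta_p^j)$. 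Hence this value is a group determinant of $G_p$, so $m\in S(G_p)$.

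The case $k=0$ is covered by the same argument. Take $a\in A$ with $a\equiv0\pmod p$ and $c=-a/p$. Then the factor $F(1,1)+p^2c$ in~\eqref{eq:shift} equals $pa-pa=0$, so the determinant is $0$. If one prefers to avoid dividing by zero, one can use~\eqref{eq:shift} directly instead of the ratio form. Either way, $0\in S(G_p)$; this is also clear from $M_p(0)=0$.

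Every step here is routine, so there is no real obstacle. The only points to check are that $a\neq0$, so that the ratio form is legitimate, and that $k\equiv a\pmod p$, so that $c=(k-a)/p$ is an integer. The hypothesis $A\subset\Z\setminus\{0\}$ together with surjectivity onto $\Z/p\Z$ provides exactly these. Negation via~\eqref{eq:negation} is not needed here, though it could be used to halve the required set $A$ by symmetry.
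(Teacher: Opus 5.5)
Your proof is correct and is essentially the paper's argument: apply the ratio form \eqref{eq:shift-ratio} of the shift identity to $F_a+cH_p$, and note that $a+pc$ runs over all of $\Z$ as $a$ ranges over $A$ and $c$ over $\Z$. Your separate treatment of $k=0$ is harmless but unnecessary: the denominator in \eqref{eq:shift-ratio} is $pa\neq0$, and only the numerator vanishes, so no division by zero ever arises.
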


\begin{proof}
For $a\in A$, Lemma~\ref{lem:shift} gives
\[
  M_p(F_a+cH_p)
  =\frac{pa+p^2c}{pa}\,a p^{p+3}
  =p^{p+3}(a+pc).
\]
As $a$ runs through representatives of all residue classes modulo~$p$
and $c$ runs through $\Z$, the integers $a+pc$ exhaust~$\Z$.
\end{proof}

For $p=5$, it is enough to construct seeds for $a=1,2,5$:
by~\eqref{eq:negation}, the first two also give seeds for $a=-1,-2$, and
\[
  \{1,-1,2,-2,5\}\longrightarrow\Z/5\Z
\]
is surjective.

\section{Three seed determinants for \texorpdfstring{$C_5^2$}{C5 squared}}\label{sec:seeds}

Write $\zeta=\zeta_5$, $K=\mathbb{Q}(\zeta)$, and $\pi=1-\zeta$. The $24$
nontrivial pairs in $(\Z/5\Z)^2$ split into six orbits under
multiplication by $(\Z/5\Z)^\times$. We use the representatives
\begin{equation}\label{eq:reps}
  \mathcal R=\{(1,0),(0,1),(1,1),(3,1),(3,4),(2,3)\}.
\end{equation}
Consequently,
\begin{equation}\label{eq:norm-factorization}
  M_5(F)=F(1,1)
  \prod_{(a,b)\in\mathcal R}
  \Nm_{K/\mathbb{Q}}\bigl(F(\zeta^a,\zeta^b)\bigr).
\end{equation}
We shall use
\begin{equation}\label{eq:basic-norms}
  \Nm_{K/\mathbb{Q}}(\zeta)=1,
  \qquad
  \Nm_{K/\mathbb{Q}}(1+\zeta)=\Nm_{K/\mathbb{Q}}(1+\zeta^2)=1,
  \qquad
  \Nm_{K/\mathbb{Q}}(\pi)=5.
\end{equation}
Indeed, the middle identities follow from $\Phi_5(-1)=1$ and Galois
conjugacy.

Define
\begin{align}
  F_1(x,y)&=1+y+y^2+x+x^4y,\label{eq:F1}\\
  F_2(x,y)&=1+y+y^2+y^3+x+xy+xy^2
             +x^4+x^4y+x^4y^4,\label{eq:F2}\\
  F_5(x,y)&=H_5(x,y)-1-y+x+x^4.\label{eq:F5}
\end{align}
Their coefficient sums are
\begin{equation}\label{eq:sums}
  F_1(1,1)=5,
  \qquad F_2(1,1)=10,
  \qquad F_5(1,1)=25.
\end{equation}

\begin{proposition}\label{prop:three-seeds}
The polynomials in~\eqref{eq:F1}--\eqref{eq:F5} satisfy
\[
  M_5(F_1)=5^8,
  \qquad
  M_5(F_2)=2\cdot5^8,
  \qquad
  M_5(F_5)=5^9.
\]
Thus they satisfy the seed condition~\eqref{eq:seed-condition} for
$a=1,2,5$, respectively.
\end{proposition}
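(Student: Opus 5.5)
The plan is to evaluate each $M_5(F_a)$ through the norm factorization~\eqref{eq:norm-factorization}. The first factor $F_a(1,1)$ is given by~\eqref{eq:sums}. It then remains to show that
\[
  \prod_{(a,b)\in\mathcal R}\Nm_{K/\mathbb{Q}}\bigl(F(\zeta^a,\zeta^b)\bigr)
\]
equals $5^7$, $5^7$, and $5^7$ for $F_1$, $F_2$, $F_5$ respectively. Combined with $F_a(1,1)=5,10,25$, this gives $5^8$, $2\cdot5^8$, and $5^9$.

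For each of the $18$ pairs (polynomial, representative), substitute $x=\zeta^a$, $y=\zeta^b$ and reduce exponents modulo~$5$. This gives an element of $\Z[\zeta]$ written in the basis $1,\zeta,\zeta^2,\zeta^3$ after eliminating $\zeta^4=-1-\zeta-\zeta^2-\zeta^3$. I would then try to recognize each value as a product of the elements whose norms are listed in~\eqref{eq:basic-norms}: roots of unity, $1\pm\zeta^k$, and powers of $\pi$.

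Some values can be read off at once. For $F_1$ at $(1,0)$ the value is
\[
  3+\zeta+\zeta^{-1},
\]
which lies in $\mathbb{Q}(\sqrt5)$. Its two real conjugates are $(5\pm\sqrt5)/2$, so its $K$-norm is $(5)^2=25$.

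For $F_5$ the computation simplifies, because $H_5$ vanishes at every nontrivial pair. Hence
\[
  F_5(\zeta^a,\zeta^b)=-1-\zeta^b+\zeta^a+\zeta^{-a},
\]
which is a short expression in each case.

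Where a value is not visibly a product of known elements, I would compute its norm directly, in one of two ways. The first is to multiply the element by its complex conjugate, landing in $\Z[(1+\sqrt5)/2]$, and then take the quadratic norm $u^2-uv-v^2$. The second is to evaluate $\prod_{k=1}^4 g(\zeta^k)$ as a resultant $\operatorname{Res}(\Phi_5,g)$.

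The $5$-adic valuation gives a useful consistency check on each norm. If $g\equiv g(1)\pmod{\pi}$ and $5\nmid g(1)$, the norm is a $5$-adic unit. This tells us in advance which representatives must carry the factors of~$5$ and which must have norm $\pm1$ (or, in the case of $F_2$, must account for the factor~$2$).

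The main obstacle is purely bookkeeping. There are eighteen cyclotomic norms, and the signs and the non-$5$ parts must all multiply to exactly $1$ for $F_1$ and $F_5$ and to $2$ for $F_2$, with no stray prime. For instance, a norm of $11$ at one representative would have to be cancelled, which is impossible, so every non-$5$ contribution must be a unit except one factor~$2$ for $F_2$.

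I would organize the check in a table by representative. I would also cross-check the total using the Mossinghoff--Pinner constraint~\eqref{eq:known-divisibility}, which forces $5^8\mid M_5$. That constraint leaves only the cofactor to be verified, and for the cofactor a reduction modulo a small auxiliary prime (say modulo~$3$ or $7$, by evaluating in $\mathbb{F}_q[\zeta]$) gives an independent sanity check.

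Finally, the last sentence of the proposition follows immediately from Proposition~\ref{prop:seed-criterion}'s condition~\eqref{eq:seed-condition} with $p=5$, $p^{p+3}=5^8$, and $a=1,2,5$, using~\eqref{eq:sums}.
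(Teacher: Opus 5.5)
Your plan is the paper's proof: factor $M_5(F)$ through the six orbit norms and evaluate the eighteen values. The paper writes each value explicitly as a root of unity times $\pi$ or $\pi^2$ times powers of the units $1+\zeta$, $1+\zeta^2$, which gives norms $25,5,5,5,5,5$ for each polynomial; your computation of $F_1(\zeta,1)$ is its first-row example.

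One correction to your bookkeeping remarks: since $5\mid F(1,1)$ for all three polynomials, every value is $\equiv F(1,1)\equiv 0\pmod{\pi}$, so no representative has unit norm, and the factor $2$ in $M_5(F_2)$ comes entirely from $F_2(1,1)=10$ rather than from the orbit norms, as you correctly said in your first paragraph.
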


\begin{proof}
Direct reduction using $1+\zeta+\zeta^2+\zeta^3+\zeta^4=0$ gives the
following exact values. Each row is indexed by one representative from
\eqref{eq:reps}.
\[
\renewcommand{\arraystretch}{1.22}
\begin{array}{c|c|c|c}
(a,b)&F_1(\zeta^a,\zeta^b)&F_2(\zeta^a,\zeta^b)&F_5(\zeta^a,\zeta^b)\\ \hline
(1,0)&-\zeta^3\pi^2(1+\zeta)^2
     &\pi^2(1+\zeta)^3
     &\zeta^4\pi^2\\
(0,1)&-\zeta^3\pi(1+\zeta)^2
     &-\zeta^3\pi(1+\zeta)^2
     &\pi\\
(1,1)&-\zeta^3\pi(1+\zeta)^2
     &\zeta^3\pi
     &\zeta^4\pi\\
(3,1)&\zeta^3\pi
     &\zeta^3\pi
     &-\pi(1+\zeta)^2\\
(3,4)&-\zeta^2\pi
     &\zeta^2\pi(1+\zeta)^2
     &\zeta^2\pi(1+\zeta)^2\\
(2,3)&-\zeta^4\pi(1+\zeta)
     &-\zeta^2\pi(1+\zeta^2)
     &-\pi(1+\zeta)
\end{array}
\]
For example,
\[
  F_1(\zeta,1)=3+\zeta+\zeta^4
  =-\zeta^3(1-\zeta)^2(1+\zeta)^2;
\]
the other entries follow in the same way.

By~\eqref{eq:basic-norms}, the norm in the first row of each column is
$25$, while the norm in each of the remaining five rows is~$5$.
Therefore the product of the six nontrivial orbit norms is
$25\cdot5^5=5^7$ for each polynomial. Combining this with
\eqref{eq:norm-factorization} and~\eqref{eq:sums} gives
\[
  M_5(F_1)=5\cdot5^7=5^8,
  \quad
  M_5(F_2)=10\cdot5^7=2\cdot5^8,
  \quad
  M_5(F_5)=25\cdot5^7=5^9.
\]
\end{proof}

\section{Proof of the main result}\label{sec:proof}

\begin{proof}[Proof of Theorem~\ref{thm:divisible-image}]
The containment
\[
  S(G_5)\cap5\Z\subseteq5^8\Z
\]
is the case $p=5$ of~\eqref{eq:known-divisibility}, proved in
\cite{MossinghoffPinner2024}.

For the reverse containment, Proposition~\ref{prop:three-seeds} supplies
seeds for $a=1,2,5$. Lemma~\ref{lem:shift}, applied to $-F_1$ and
$-F_2$, supplies seeds for $a=-1,-2$. Proposition~\ref{prop:seed-criterion}
therefore gives
\[
  5^8\Z\subseteq S(G_5).
\]
The two containments prove the theorem.
\end{proof}

\begin{proof}[Proof of Corollary~\ref{cor:full-image}]
DeSilva and Pinner~\cite[Lemmas~2.1 and~2.2]{DeSilvaPinner2014} show
that the elements of $S(G_5)$ coprime to~$5$ are exactly the integers
satisfying $m^4\equiv1\pmod{25}$, namely
$m\equiv\pm1,\pm7\pmod{25}$. The $5$-divisible elements are exactly
$5^8\Z$ by Theorem~\ref{thm:divisible-image}.
\end{proof}

\begin{remark}[Scope of the construction]\label{rem:scope}
For $G_3=C_3\times C_3$, Pinner and Smyth proved the analogous formula
\[
  S(G_3)=\{9m\pm1:m\in\Z\}\cup3^6\Z
\]
\cite[Theorem~6.1]{PinnerSmyth2020}. The shift identity and seed
criterion above are valid for every odd prime, but the existence of a
set of seeds satisfying~\eqref{eq:seed-condition} is a separate
problem. The present argument is specific to $p=5$, and the clean
formulas for $p=3$ and $p=5$ may reflect exceptional behavior. In
particular, we do not assert that
$p^{p+3}\Z\subseteq S(C_p\times C_p)$ for any $p\ge7$.
\end{remark}

\section*{Acknowledgments}

The author thanks the referee for pointing out the work of DeSilva and
Pinner and for suggestions that led to a shorter and more focused
presentation.

\end{document}